\newtheorem{theorem}{Theorem}
\newtheorem{corollary}[theorem]{Corollary}
\newtheorem{lemma}[theorem]{Lemma}
\newtheorem{remark}[theorem]{Remark}
\DeclareMathOperator\End{End}
\begin{document}
\title{Decomposing elements of a right self-injective ring}
\author{Feroz Siddique}
\address{Department of Mathematics and Computer Science, St. Louis University, St.
Louis, MO-63103, USA}
\email{fsiddiq2@slu.edu}
\author{Ashish K. Srivastava}
\address{Department of Mathematics and Computer Science, St. Louis University, St.
Louis, MO-63103, USA}
\email{asrivas3@slu.edu}

\keywords{units, $2$-Good rings, twin-good rings, von Neumann regular ring, right self-injective ring, unit sum number.}
\subjclass[2000]{16U60, 16D50}

\begin{abstract}
It was proved independently by both Wolfson [An ideal theoretic characterization of the ring of all
linear transformations, Amer. J. Math. 75 (1953), 358-386] and Zelinsky [Every Linear Transformation is Sum of Nonsingular Ones,
Proc. Amer. Math. Soc. 5 (1954), 627-630] that every linear transformation of a vector space $V$ over a division ring $D$ is the sum of two invertible linear transformations except when $V$ is one-dimensional over $\mathbb Z_2$. This was extended by Khurana and Srivastava [Right self-injective rings in which each element is sum of two units, J. Algebra and its Appl., Vol. 6, No. 2 (2007), 281-286] who proved that every element of a right self-injective ring $R$ is the sum of two units if and only if $R$ has no factor ring isomorphic to $\mathbb Z_2$. In this paper we prove that if $R$ is a right self-injective ring, then for each element $a\in R$ there exists a unit $u\in R$ such that both $a+u$ and $a-u$ are units if and only if $R$ has no factor ring isomorphic to $\mathbb Z_2$ or $\mathbb Z_3$. 
\end{abstract}

\maketitle

\noindent All our rings are associative with identity element $1$. Following V\'{a}mos \cite{Vamos}, an element $x$ in a ring $R$ is called a {\it $k$-good element} if $x$ can be expressed as the sum of $k$ units in $R$. A ring $R$ is called a {\it $k$-good ring} if each element of $R$ is a $k$-good element. Many authors including Chen \cite{Chen1}, Ehrlich \cite{Ehrlich}, Henriksen \cite{Henriksen1}, Fisher - Snider \cite{FS}, Khurana - Srivastava (\cite{KS1}, \cite{KS2}), Raphael \cite{Raphael}, V\'{a}mos (\cite{Vamos}, \cite{VW}), Wiegand \cite{VW} and Wang - Zhou \cite{WZ} have studied rings generated additively by their unit elements, in particular, $2$-good rings. We refer the readers to \cite{Sri} for a survey of rings generated by units.

\bigskip 

\noindent In \cite{Perkins} a ring $R$ is said to be a {\it twin-good ring} if for each $x\in R$ there exists a unit $u\in R$ such that both $x+u$ and $x-u$ are units in $R$. Clearly every twin-good ring is $2$-good. However, there are numerous examples of $2$-good rings which are not twin-good. For example, $\mathbb Z_3$ is $2$-good but not twin-good. We denote by $J(R)$, the Jacobson radical of ring $R$ and by $U(R)$, the group of units of $R$. 

The following observations were noted in \cite{Perkins}. Their proofs are straightforward.

\begin{lemma} \label{divring}
If $D$ is a division ring such that $|D|\ge 4$, then $D$ is twin-good.
\end{lemma}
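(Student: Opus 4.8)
The plan is to exploit the fact that in a division ring every nonzero element is a unit, so the twin-good condition collapses to a purely combinatorial avoidance problem. Fix an arbitrary element $x \in D$. A nonzero $u$ witnesses the twin-good property for $x$ precisely when $u$, $x+u$, and $x-u$ are all units, which in a division ring means all three are nonzero. The heart of the argument will be the observation that these requirements forbid $u$ from lying in a set of size at most three, so the hypothesis $|D| \ge 4$ guarantees an admissible choice.

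First I would translate the three nonvanishing conditions into constraints directly on $u$: the requirement $u \neq 0$ stays as is, while $x + u \neq 0$ and $x - u \neq 0$ become $u \neq -x$ and $u \neq x$, respectively. Combining these gives the forbidden set $S = \{0,\, x,\, -x\}$, and I would then note that $|S| \le 3$ in every case, regardless of whether $x = 0$ or $2x = 0$ (either of which only shrinks $S$). Since $|D| \ge 4 > |S|$, the complement $D \setminus S$ is nonempty, so I may pick any $u \in D \setminus S$; by construction $u$, $x+u$, and $x-u$ are all nonzero, hence all units. As $x$ was arbitrary, $D$ is twin-good.

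I do not anticipate a genuine obstacle here, consistent with the remark that the proof is straightforward; the only point needing a moment's care is verifying that the forbidden set $S$ never exceeds three elements, which is immediate from its definition. The role of the hypothesis $|D| \ge 4$ is exactly to ensure $D$ outsizes this set, and the cases excluded by the theorems cited in the abstract — the two-element and three-element fields — are precisely those where $|D| \le 3 = |S|$ can fail the count, which is why the bound is sharp.
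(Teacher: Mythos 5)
Your proof is correct and is the standard argument the paper has in mind (the paper omits the proof, noting only that it is straightforward and citing Perkins's thesis): in a division ring the twin-good condition for $x$ amounts to choosing $u$ outside the forbidden set $\{0, x, -x\}$ of size at most $3$, which is possible exactly when $|D|\ge 4$.
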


\begin{lemma} \label{radtwin}
For a ring $R$, we have the following:
\begin{enumerate}[(i)]
\item If $R$ is twin-good then for any proper ideal $I$ of $R$, the factor ring $R/I$ is also twin-good.
\item If a factor ring $R/I$ is twin-good and $I\subseteq J(R)$, then $R$ is twin-good. Thus, in particular, it follows that a ring $R$ is twin-good if and only if $R/J(R)$ is twin-good. 
\item If $R$ is a direct product of rings $R_i$ where each $R_i$ is a twin-good ring, then $R$ is also a twin-good ring.
\end{enumerate}
\end{lemma}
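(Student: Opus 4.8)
The plan is to treat the three parts separately, in each case exploiting that a ring homomorphism carries units to units and, for part (ii), the sharper fact that units can be both detected and lifted modulo an ideal contained in the Jacobson radical.

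For part (i), I would work with the canonical surjection $\pi\colon R\to R/I$. Given an arbitrary $\bar x\in R/I$, lift it to some $x\in R$. Since $R$ is twin-good, I can choose a unit $u\in R$ with both $x+u$ and $x-u$ units. Applying $\pi$, the image $\bar u=\pi(u)$ is a unit in $R/I$, and $\bar x+\bar u=\pi(x+u)$ and $\bar x-\bar u=\pi(x-u)$ are likewise units. Hence $R/I$ is twin-good, and I expect no obstacle here.

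For part (ii), the key input is the standard fact that when $I\subseteq J(R)$, an element $a\in R$ is a unit if and only if its image $\bar a$ is a unit in $R/I$; this rests on the property that $1-j$ is a unit for every $j\in J(R)$. I would record this first: if $\bar a$ is a unit, choose $b$ with $ab-1,\ ba-1\in I\subseteq J(R)$, whence $ab$ and $ba$ are units, so $a$ has both a right and a left inverse and is therefore a unit. Granting this, take any $x\in R$; since $R/I$ is twin-good there is a unit $\bar u\in R/I$ with $\bar x+\bar u$ and $\bar x-\bar u$ units. Lifting $\bar u$ to some $u\in R$, the key fact makes $u$ a unit in $R$, and since $\overline{x+u}$ and $\overline{x-u}$ are units in $R/I$, the same fact makes $x+u$ and $x-u$ units in $R$. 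Thus $R$ is twin-good. The ``in particular'' statement then follows by taking $I=J(R)$ and combining this with part (i).

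For part (iii), I would argue coordinatewise. Writing $x=(x_i)\in\prod_i R_i$, twin-goodness of each $R_i$ yields a unit $u_i\in R_i$ with $x_i+u_i$ and $x_i-u_i$ units in $R_i$. Setting $u=(u_i)$, the element $u$ is a unit in the product, since a tuple is a unit exactly when each coordinate is, and $x+u=(x_i+u_i)$ and $x-u=(x_i-u_i)$ are units for the same reason, so $R$ is twin-good. The only place demanding genuine care is part (ii), where the unit-lifting property modulo $J(R)$ must be invoked; the remaining parts are formal consequences of units being preserved by homomorphisms and by passage to and from direct factors.
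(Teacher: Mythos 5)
Your proof is correct and is exactly the standard argument the paper has in mind when it declares these proofs ``straightforward'' and omits them: part (i) from surjections preserving units, part (ii) from the fact that units lift and are detected modulo an ideal contained in $J(R)$, and part (iii) coordinatewise. No issues.
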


\section{Main Results}

\noindent A ring $R$ is called \textit{right self-injective} if each right $R$-homomorphism from any right ideal of $R$ to $R$ can be extended to an endomorphism of $R$. As the ring of linear transformations is a right self-injective ring, the result of Wolfson and Zelinsky attracted quite a bit of attention toward understanding which right self-injective rings are $2$-good.

\begin{theorem}$($V\'{a}mos \cite{Vamos}$)$
A right self-injective ring $R$ is $2$-good if $R$ has no non-zero corner ring that is Boolean. 
\end{theorem}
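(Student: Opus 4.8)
The plan is to reduce to the von Neumann regular case and then exploit the type decomposition of regular right self-injective rings, isolating the abelian component as the only genuine source of difficulty.

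First I would pass to $R/J(R)$. Just as in Lemma \ref{radtwin}(ii), since units lift modulo the Jacobson radical, the analogous argument shows that $R$ is $2$-good if and only if $R/J(R)$ is $2$-good; and by the standard structure theory of right self-injective rings, $R/J(R)$ is von Neumann regular, right self-injective, and idempotents (in fact central idempotents) lift modulo $J(R)$. I would then verify that the hypothesis descends to $R/J(R)$: since a Boolean ring is semiprimitive, tracking a Boolean corner through the quotient is a matter of following its idempotent structure back to a corner of $R$. Thus it suffices to prove the theorem when $R$ is itself regular and right self-injective.

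Next, using the central idempotents of $R$ I would split $R=eR\times(1-e)R$, where $eR$ is abelian (equivalently strongly regular: all idempotents central) and $(1-e)R$ contains no nonzero abelian idempotent. Since a finite direct product is $2$-good as soon as each factor is (the $2$-good analogue of Lemma \ref{radtwin}(iii)), the two factors may be treated separately. For $(1-e)R$ the absence of abelian idempotents forces every nonzero corner to split off a copy of a $2\times 2$ matrix ring; running the Wolfson–Zelinsky argument fibrewise — a linear transformation of a space of dimension at least two over any division ring is a sum of two invertible ones, with no exceptional case — and then gluing the resulting unit decompositions by right self-injectivity, shows that $(1-e)R$ is unconditionally $2$-good. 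The same observation disposes of every homogeneous component $M_n(A_n)$ with $n\ge 2$ in the type $\mathrm{I}$ part, as well as the type $\mathrm{II}$ and type $\mathrm{III}$ parts.

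This leaves the abelian regular right self-injective ring $eR$, where the real work lies. Such a ring is a subdirect product of division rings, realised as sections over the (necessarily complete) Boolean algebra of its central idempotents, and $2$-goodness can be tested fibrewise: a division-ring fibre $D$ is $2$-good precisely when $D\neq\mathbb Z_2$ (for $|D|\ge 4$ this follows from Lemma \ref{divring}, while $\mathbb Z_3$ is $2$-good by inspection and $\mathbb Z_2$ is the unique failure). The locus of $\mathbb Z_2$-fibres is cut out by a central idempotent $f$, and $fR$ is exactly a nonzero Boolean corner, so the hypothesis forces $f=0$; hence every fibre is $2$-good. I would then express a given element of $eR$ as a sum of two units on each fibre and invoke injectivity (completeness of the Boolean algebra) to patch these local solutions into a single global pair of units. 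The main obstacle is precisely this last step: making the fibrewise description rigorous, showing that the bad points genuinely assemble into a \emph{Boolean corner} $fR$ rather than a mere subset, and then carrying out the gluing — it is here that self-injectivity must be used essentially, to pass from pointwise invertibility to a globally invertible pair.
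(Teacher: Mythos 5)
The paper does not actually prove this statement --- it is quoted from V\'amos \cite{Vamos} --- so the only internal point of comparison is the proof of Theorem \ref{main}, whose architecture (pass to the regular right self-injective ring $R/J(R)$, decompose by type, kill the non-abelian part with matrix-ring arguments, treat the abelian regular part separately) your outline does follow. But two of your steps have genuine gaps. The first is the descent of the hypothesis to $R/J(R)$. Lifting an idempotent $\bar e$ with $\bar e\bar R\bar e$ Boolean only produces a corner $eRe$ of $R$ with $eRe/J(eRe)$ Boolean, and that does not make $eRe$ Boolean. The ring $\mathbb Z_4$ makes the failure concrete: it is right self-injective, its only non-zero corner is $\mathbb Z_4$ itself (which is not Boolean), yet its units are $\{1,3\}$ and $1$ is not a sum of two of them, so it is not $2$-good. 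Thus the ``no non-zero Boolean corner'' condition must be read on $R/J(R)$ (as in V\'amos's own formulation) for your first reduction --- and indeed for the statement as transcribed here --- to survive; your remark that a Boolean ring is semiprimitive does not repair this, since the obstruction lives in the corner of $R$, not in the corner of the quotient.

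The second gap is the one you yourself flag as ``the main obstacle,'' and it is not a technicality but the entire content of the abelian case: you assert that the locus of $\mathbb Z_2$-stalks in the Pierce representation of $eR$ is cut out by a central idempotent $f$, and that fibrewise $2$-unit decompositions can be patched into a global one. Neither is established --- the set of points where the stalk is $\mathbb Z_2$ is not a priori clopen, and pointwise invertibility does not glue to global invertibility without an argument. The device used in this paper (for the $n_i=1$ case of Theorem \ref{main}, following \cite{KS1}) avoids both problems and is what you should substitute: if some $x$ is not $2$-good, Zorn's lemma yields an ideal $M$ maximal among ideals $I$ for which $\bar x$ is not $2$-good in $S/I$; maximality forces $S/M$ to be indecomposable, hence --- being abelian regular --- a division ring, which by Lemma \ref{divring} (plus the observation that $\mathbb Z_3$ is $2$-good) must be $\mathbb Z_2$, and one then contradicts the hypothesis. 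The same comment applies to your treatment of the non-abelian part: ``running Wolfson--Zelinsky fibrewise and gluing by self-injectivity'' conceals another unproved patching step, whereas the type decomposition gives $T\cong\mathbb M_3(T)$ for the purely infinite and type $II_f$ summands and a product of matrix rings for type $I_f$, after which Lemma \ref{edr} and Lemma \ref{abelian} (or their $2$-good analogues) apply directly with no gluing at all.
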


Khurana and Srivastava \cite{KS1} extended the result of Wolfson and Zelinsky to the class of right self-injective rings and proved the following

\begin{theorem} $($Khurana, Srivastava \cite{KS1}$)$
A right self-injective ring $R$ is $2$-good if and only if $R$ has no factor ring isomorphic to $\mathbb{Z}_{2}$. 
\end{theorem}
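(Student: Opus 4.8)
The plan is to prove the two implications separately, packing almost all of the difficulty into the sufficiency (``if'') direction, which I would attack through the structure theory of von Neumann regular right self-injective rings. For necessity I would need only two elementary observations: a homomorphic image of a $2$-good ring is $2$-good (units map to units), and $\mathbb Z_2$ is not $2$-good (its unique unit is $1$, and $1+1=0\ne 1$). Hence a $2$-good ring admits no factor ring isomorphic to $\mathbb Z_2$.

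For sufficiency, assume $R$ is right self-injective with no factor ring $\cong \mathbb Z_2$. First I would reduce to the regular case by passing to $\overline R = R/J(R)$: this ring is von Neumann regular and right self-injective, units lift modulo $J(R)$ (so that $2$-goodness of $\overline R$ pulls back to $R$), and every factor ring of $\overline R$ is already a factor ring of $R$, so $\overline R$ still has no $\mathbb Z_2$ factor. I would then apply the type decomposition of Goodearl, splitting $\overline R$ as a direct product of its Type $\mathrm{I}_f$, $\mathrm{I}_\infty$, $\mathrm{II}_f$, $\mathrm{II}_\infty$, and $\mathrm{III}$ summands. Since a direct product of $2$-good rings is $2$-good and the no-$\mathbb Z_2$-factor hypothesis descends to each summand (any factor ring of a direct factor is a factor ring of the whole), it suffices to prove each summand is $2$-good. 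Concretely, $\overline R \cong A \times R'$, where $A$ is the abelian homogeneous Type $\mathrm{I}_f$ part and $R'$ collects the remaining summands.

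The abelian part $A$ is exactly where Vámos's theorem becomes decisive. In an abelian regular ring every idempotent is central, so each corner $eAe = eA$ is a direct factor and hence a factor ring of $A$. A nonzero Boolean corner would therefore surject onto $\mathbb Z_2$ and yield a $\mathbb Z_2$ factor of $A$; since none exists, $A$ has no nonzero Boolean corner, and Vámos's theorem immediately gives that $A$ is $2$-good. For the complementary part $R'$, I would use that each of its summands carries a nontrivial matrix structure: the properly infinite summands ($\mathrm{I}_\infty$, $\mathrm{II}_\infty$, $\mathrm{III}$) and the $\mathrm{II}_f$ summand admit a decomposition $1 = e + f$ into orthogonal equivalent idempotents, hence are isomorphic to $M_2(eRe)$, while the finite homogeneous Type $\mathrm{I}_f$ summands of rank $n \ge 2$ are of the form $M_n(A_n)$ with $A_n$ abelian regular self-injective. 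None of these has $\mathbb Z_2$ as a factor, since a factor ring of $M_n(\cdot)$ with $n \ge 2$ is again a full matrix ring.

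The hard part will be proving that these matrix-type summands are $2$-good, and this is genuinely new content beyond Vámos: the ring $M_2(\mathbb Z_2)$ has the Boolean corner $E_{11}M_2(\mathbb Z_2)E_{11} \cong \mathbb Z_2$, yet it is $2$-good (by Wolfson--Zelinsky) and has no $\mathbb Z_2$ factor, so Vámos's criterion cannot see it. My approach would be to establish $2$-goodness of $M_n(\cdot)$, $n \ge 2$, over the relevant regular self-injective bases, either through an explicit $2 \times 2$ decomposition of an arbitrary matrix into invertible ones or, where the stalks are matrix rings over division rings, by solving the problem fibrewise via Wolfson--Zelinsky and then patching the local choices into a single pair of global units using the self-injectivity of $\overline R$ (equivalently, the completeness of the Boolean algebra of central idempotents). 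This uniform patching, rather than the stalkwise solvability, is the main obstacle; once it is in place, combining the abelian case with the matrix-type cases shows $\overline R$, and therefore $R$, is $2$-good, completing the equivalence.
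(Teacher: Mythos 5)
Your skeleton --- pass to $R/J(R)$, invoke the type decomposition of the resulting regular right self-injective ring, treat the abelian part separately, and realize every remaining summand as a full matrix ring $\mathbb M_n(S)$ with $n\ge 2$ --- is exactly the architecture of the proof in \cite{KS1} (the paper only quotes this theorem, but its own Theorem \ref{main} is the twin-good analogue proved along the same lines). Your necessity argument is fine, and your use of V\'amos's theorem on the abelian part is a legitimate shortcut: there every idempotent is central, so a nonzero Boolean corner is a direct factor and would produce a $\mathbb Z_2$ factor ring; the paper instead handles this case by a Zorn's lemma argument reducing to division ring factors.

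The genuine gap is the step you yourself call ``the main obstacle'': you never prove that the matrix summands $\mathbb M_n(S)$, $n\ge 2$, are $2$-good, and neither of your proposed routes works as stated. An ``explicit $2\times 2$ decomposition of an arbitrary matrix into invertible ones'' is not available over an arbitrary base ring (Henriksen's general theorem yields three units, not two), and the fibrewise-plus-patching plan cannot even get started on the Type $II$ and Type $III$ summands, which admit no representation with division-ring stalks. The missing idea --- the single lemma the entire argument turns on --- is diagonal reduction: von Neumann regular right self-injective rings and their corner rings are elementary divisor rings (Henriksen \cite{Henriksen2}, Ara et al.\ \cite{AGMP}), so after multiplying by units on both sides one may assume the matrix is diagonal, and a diagonal $n\times n$ matrix with $n\ge 2$ is visibly the sum of two invertible matrices (a lower-triangular one with units on the diagonal plus a suitable permutation-like one). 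This is precisely the observation the paper records immediately before Lemma \ref{edr}. With that input your argument closes; without it, the central case of the theorem is unproved.
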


We will prove an analogue of this result for twin-good rings. But, first we have some definitions and useful lemmas.

We say that an $n\times n$ matrix $A$ over a ring $R$ \textit{admits a diagonal reduction} if there exist invertible matrices $P,Q\in \mathbb M_{n}(R)$ such that $PAQ $ is a diagonal matrix. Following Ara et. al. \cite{AGMP}, a ring $R$ is called an \textit{elementary divisor ring} if every square matrix over $R$ admits a diagonal reduction. This definition is less stringent than the one proposed by Kaplansky in \cite{Kap}. The class of elementary divisor rings includes unit-regular rings and von Neumann regular right self-injective rings (see \cite{AGMP}, \cite{Henriksen2}).

 If $R$ is an elementary divisor ring, then clearly the matrix ring $\mathbb M_n(R)$ is $2$-good for each $n\ge 2$. In the case of twin-good rings, we have the following  
 
\begin{lemma} \label{edr}
Let $R$ be an elementary divisor ring. Then the matrix ring $\mathbb M_n(R)$ is twin-good for each $n \ge 3$.
\end{lemma}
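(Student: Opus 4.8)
The plan is to use the elementary divisor hypothesis only to reduce an arbitrary matrix to diagonal form, and then to produce the required unit by an explicit construction that works over any ring. First I would fix $A\in\mathbb M_n(R)$ and, since $R$ is an elementary divisor ring, choose invertible $P,Q$ with $PAQ=D$ diagonal, say $D=\operatorname{diag}(d_1,\dots,d_n)$. If I can find a unit $V$ with $D+V$ and $D-V$ both units, then setting $U=P^{-1}VQ^{-1}$ gives a unit with $A\pm U=P^{-1}(D\pm V)Q^{-1}$ again units; so it suffices to prove that every diagonal matrix $D$ admits such a $V$. This cleanly isolates the one place the hypothesis is used and reduces the lemma to a statement about diagonal matrices over an arbitrary ring.

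For the construction I would take $V=\operatorname{diag}(v_1,\dots,v_n)+N$, where $N$ is the cyclic shift with $1$'s on the superdiagonal and a $1$ in the $(n,1)$ corner, and $v_1,\dots,v_n\in R$ are to be chosen. Then $V$, $M:=D+V$ and $L:=D-V$ are all \emph{cyclic bidiagonal}: each has the shape $\operatorname{diag}(x_1,\dots,x_n)+\varepsilon N$ with $\varepsilon=\pm1$ and a prescribed diagonal (namely $x_i=v_i$, $x_i=d_i+v_i$, $x_i=d_i-v_i$ respectively, with $\varepsilon=1,1,-1$). The key computation is that, because $\varepsilon$ is a unit, one may use the off-diagonal $\varepsilon$'s as pivots and perform elementary row operations that collapse such a matrix down to a single ring-theoretic entry to verify: the matrix is invertible over $\mathbb M_n(R)$ if and only if an element of the form $c+x_nx_{n-1}\cdots x_1$ with $c\in\{1,-1\}$ is a unit in $R$. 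In particular it is automatically a unit whenever the reversed product $x_n\cdots x_1$ vanishes.

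It then remains to choose the $v_i$ so that all three associated products vanish simultaneously. Here I would set $v_1=-d_1$, $v_2=d_2$, and $v_3=\cdots=v_n=0$. With this choice the product for $M$ has rightmost factor $d_1+v_1=0$, the product for $L$ has the factor $d_2-v_2=0$, and the product for $V$ contains the factor $v_3=0$; hence each reversed product is $0$ and each of $V$, $M$, $L$ reduces to $c=\pm1$, a unit. This is exactly where $n\ge3$ is needed: with only two diagonal slots the vanishing of the $V$-product cannot be arranged independently of the constraints that $v_1=-d_1$ and $v_2=d_2$ impose to handle $M$ and $L$.

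I expect the main obstacle to be exhibiting a single choice of the $v_i$ that works over an arbitrary ring, which may be noncommutative and unit-poor (e.g.\ $\mathbb Z_2$, where one can neither divide nor invoke a determinant); the resolution is the observation above that invertibility of a cyclic bidiagonal matrix depends only on the reversed product of its diagonal, so it is enough to force one factor of each of the three products to be zero, which the independent slots $v_1,v_2,v_3$ permit precisely when $n\ge3$. A secondary technical point is pinning down the constant $c\in\{1,-1\}$ produced by the row reduction, but since we apply the criterion only when the product vanishes, its exact value is immaterial to the argument.
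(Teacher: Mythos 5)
Your proof is correct, and its skeleton is the same as the paper's: reduce $A$ to a diagonal matrix $D=\operatorname{diag}(d_1,\dots,d_n)$ via the elementary divisor hypothesis, exhibit an explicit unit $V$ built on a cyclic shift with $D\pm V$ both invertible, and conjugate back by $P^{-1},Q^{-1}$. Where you differ is in the choice of $V$ and the verification mechanism. The paper takes $U$ to be the cyclic shift itself (zero diagonal) but inserts a correction term $d_nd_1$ into the $(n,1)$ corner, chosen so that the expansion of $A\pm U$ collapses to $\pm 1$ exactly; it then asserts that invertibility of $U$ and $A\pm U$ ``can easily be checked.'' You instead keep the shift clean and perturb the \emph{diagonal} of $V$ (taking $v_1=-d_1$, $v_2=d_2$, $v_3=\cdots=v_n=0$), and you isolate the reusable fact that a matrix $\operatorname{diag}(x_1,\dots,x_n)+\varepsilon N$ with $\varepsilon=\pm1$ is invertible precisely when $c+x_n\cdots x_1$ is a unit, so it suffices to force one zero factor into each of the three cyclic products. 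Your version has two advantages: the row-reduction criterion replaces the paper's unproved ``easily checked'' step with an argument valid over an arbitrary (noncommutative) ring, and it makes completely transparent why $n\ge 3$ is needed (three independent diagonal slots for the three conditions on $V$, $D+V$, $D-V$), a point the paper leaves implicit. The paper's corner trick is slightly slicker in that a single correction entry handles both $A+U$ and $A-U$ at once, but both constructions are elementary and correct.
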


\begin{proof}
Let $n \in N$ such that $n \ge 3$. Let $M$ be any arbitrary element of $\mathbb M_n(R)$. Then there exist invertible matrices $E, F\in \mathbb M_n(R)$ such that $EMF$ is a diagonal matrix. Set $A=EMF$. Then $A\in \mathbb M_n(R)$ is a diagonal matrix. Suppose \[A=  \left[\begin{array}{ccccccc}
d_{1}&0&0&0\cdots&0&0  \\0&d_{2}&0&0\cdots&0&0\\  0&0&d_{3}&0\cdots&0&0 \\ 0&0&0&d_{4}\cdots&0&0 \\   \vdots  &\vdots   &\vdots &\vdots  &\vdots &\vdots  \\ 0&0&0&0\cdots&d_{n-1}&0\\0&0&0&0\cdots&0&d_{n}\\ \end{array}\right].\]

\noindent We consider the first (n-1) columns of the first row of $A$ and call it $P$. Thus $P$ is a $1\times (n-1)$ matrix given by $P=\left[\begin{array}{ccccccc}
d_{1}&0&0&0\cdots&0 \\ \end{array}\right]$. 
Similarly we consider the last (n-1) rows of the last column of $A$ and call it $Q$. Thus $Q$ is a $(n-1) \times 1$ matrix given by $Q=\left[\begin{array}{c}
0 \\ 0\\0\\ \vdots\\d_{n} \end{array}\right]$. Now we consider the lower left $(n-1)\times (n-1)$ block in $A$ and call it $B$. Thus $B=\left[\begin{array}{cccccc}
0&d_{2}&0&0&0\cdots&0  \\0&0&d_{3}&0&0\cdots&0\\  0&0&0&d_{4}&0\cdots&0 \\ 0&0&0&0&d_{5}\cdots&0 \\   \vdots  &\vdots   &\vdots &\vdots   &\vdots  \\ 0&0&0&0&0\cdots&d_{n-1}\\ \end{array}\right].
$

\noindent Let $T$ = $QP + I_{(n-1)}$. Then $T=\left[\begin{array}{ccccc}
1&0&0&0\cdots&0 \\0&1&0&0\cdots&0\\  0&0&1&0\cdots&0 \\ 0&0&0&1\cdots&0 \\   \vdots  &\vdots   &\vdots &\vdots   &\vdots  \\ d_n\*d_1&0&0&0\cdots&1\\ \end{array}\right] \in \mathbb M_{n-1}(R)$.

\noindent  Now we create an $n\times n$ matrix $U = \left[
\begin{array}{cc}
O_1 & 1 \\
T & O_2
\end{array}
\right],
$
where $O_1$ and $O_2$ are $1\times (n-1)$ and  $(n-1)\times 1$ zero matrices, $1$ is the identity in $R$ and $T$ is the $(n-1)\times (n-1)$ matrix created above.
Then \[U=\left[\begin{array}{cccccc}
0&0&0&0\cdots&0&1 \\1&0&0&0\cdots&0&0\\  0&1&0&0\cdots&0&0 \\ 0&0&1&0\cdots&0&0 \\   \vdots  &\vdots   &\vdots &\vdots   &\vdots &\vdots \\ d_n\*d_1&0&0&0\cdots&1&0\\ \end{array}\right]\in \mathbb M_n(R).\]

\noindent Clearly $U$ is a unit in $\mathbb M_{n}(R)$ whose inverse is given by $U^{-1}=[a_{i,j}]$, where $a_{i,i+1}=1$, $a_{n-1, 2}=-d_n\*d_1$, $a_{n,1}=1$, and $a_{i,j}=0$ elsewhere.

\noindent  Now we consider the matrices $A+U, A-U$ in $\mathbb M_{n}(R)$ which are of the form \[\left[\begin{array}{cccccc}
d_1&0&0&0\cdots&0&1 \\1&d_2&0&0\cdots&0&0\\  0&1&d_3&0\cdots&0&0 \\ 0&0&1&d_4\cdots&0&0 \\   \vdots  &\vdots   &\vdots &\vdots   &\vdots  \\ d_n\*d_1&0&0&0\cdots&1&d_n\\ \end{array}\right]
, \left[\begin{array}{cccccc}
d_1&0&0&0\cdots&0&-1 \\-1&d_2&0&0\cdots&0&0\\  0&-1&d_3&0\cdots&0&0 \\ 0&0&-1&d_4\cdots&0&0 \\   \vdots  &\vdots   &\vdots &\vdots   &\vdots  \\ -d_n\*d_1&0&0&0\cdots&-1&d_n\\ \end{array}\right]\] respectively.
 
 It can easily be checked that $A+U$ and $A-U$ are invertible matrices. Thus we have shown that there exists an invertible matrix $U \in \mathbb M_n(R)$ such that both $A+U$ and $A-U$ are invertible matrices. Clearly $E^{-1}UF^{-1}$ is also invertible in $\mathbb M_n(R)$ such that both $E^{-1}AF^{-1} + E^{-1}UF^{-1}$ and $E^{-1}AF^{-1} - E^{-1}UF^{-1}$ are invertible. Thus it follows that $M$ is twin-good. Hence the matrix ring $\mathbb M_n(R)$ is twin-good for each $n \ge 3$.
\end{proof}

It follows from the result of Wolfson and Zelinsky that any proper matrix ring $\mathbb M_n(D)$ is $2$-good where $D$ is a division ring and $n\ge 2$. For twin-good rings, we have the following.

\begin{lemma} \label{abelian}
If $R$ is an abelian regular ring, then $\mathbb M_2(R)$ is twin-good.
\end{lemma}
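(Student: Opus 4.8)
The plan is to reduce an arbitrary $A\in\mathbb M_2(R)$ to a diagonal matrix and then to split $R$ by central idempotents. First I would record that an abelian regular ring is reduced von Neumann regular, hence strongly regular, hence unit-regular, and therefore (by the discussion preceding Lemma~\ref{edr}) an elementary divisor ring. Thus any $A$ can be written $A=E^{-1}DF^{-1}$ with $E,F$ invertible and $D=\operatorname{diag}(d_1,d_2)$ diagonal. Exactly as at the end of the proof of Lemma~\ref{edr}, if $U$ is a unit with $D+U$ and $D-U$ units, then $A\pm E^{-1}UF^{-1}=E^{-1}(D\pm U)F^{-1}$ shows $E^{-1}UF^{-1}$ witnesses that $A$ is twin-good. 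So it suffices to prove every diagonal $D=\operatorname{diag}(d_1,d_2)$ is twin-good.

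Next I would invoke the structure of strongly regular rings: for each $d_i$ there are a central idempotent $e_i$ and a unit $u_i$ with $d_i=u_ie_i=e_iu_i$ (take $e_i$ the central idempotent generating $d_iR=Rd_i$). The four idempotents $f_{11}=e_1e_2$, $f_{10}=e_1(1-e_2)$, $f_{01}=(1-e_1)e_2$, $f_{00}=(1-e_1)(1-e_2)$ are central, orthogonal and sum to $1$, so $R\cong Rf_{11}\times Rf_{10}\times Rf_{01}\times Rf_{00}$ and hence $\mathbb M_2(R)\cong\prod_{ij}\mathbb M_2(Rf_{ij})$. Since a tuple is a unit, and a single element is twin-good, in a finite direct product exactly when every component is, it is enough to produce in each factor a unit $U_{ij}$ with $D\pm U_{ij}$ units; and in $Rf_{ij}$ the entry $d_1$ becomes a unit or $0$ according as the first index is $1$ or $0$, and similarly $d_2$ for the second index. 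It is precisely the \emph{centrality} of the idempotents, i.e.\ the abelian (not merely unit-regular) hypothesis, that allows this splitting into a direct product.

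In the three factors $Rf_{10}$, $Rf_{01}$, $Rf_{00}$ at least one diagonal entry is $0$, and there the swap matrix $\left(\begin{smallmatrix}0&1\\1&0\end{smallmatrix}\right)$ suffices: a direct check shows $\left(\begin{smallmatrix}d&\pm1\\\pm1&0\end{smallmatrix}\right)$ and $\left(\begin{smallmatrix}0&\pm1\\\pm1&d\end{smallmatrix}\right)$ are invertible for every $d$ (one writes down the inverse explicitly, so no unit hypothesis on $d$ is needed). These cases are routine.

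The hard part will be the remaining factor $Rf_{11}$, where both $d_1,d_2$ are units so that $D$ itself is a unit $W$; this is the crux. Here I would set $U=WV$ with $V=\left(\begin{smallmatrix}0&1\\1&1\end{smallmatrix}\right)$, so that $D\pm U=W(I\pm V)$ and it suffices that $V$, $I+V$, $I-V$ be simultaneously invertible. One cannot simply take $U=W$ (then $D-U=0$), and over a factor in which $2$ is not invertible — for instance one mapping onto $\mathbb Z_2$ — symmetric choices such as $\left(\begin{smallmatrix}0&1\\-1&0\end{smallmatrix}\right)$ fail because $\det(I\pm V)=2$; this is exactly the obstruction that keeps $\mathbb Z_2$ and $\mathbb Z_3$ from being twin-good. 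The key observation is that $\det(I\pm V)=1\pm\operatorname{tr}(V)+\det(V)$, so choosing $\det V=-1$ and $\operatorname{tr}V=1$ forces all three of $\det V,\det(I+V),\det(I-V)$ into $\{\pm1\}$; the integer matrix $\left(\begin{smallmatrix}0&1\\1&1\end{smallmatrix}\right)$ does this, and since its entries lie in $\mathbb Z\cdot 1\subseteq Z(R)$ the adjugate identity yields genuine two-sided inverses even though $R$ is noncommutative. Verifying that this single choice clears the $\mathbb Z_2$- and $\mathbb Z_3$-type factors at once, and then reassembling the componentwise units into an honest unit of $\mathbb M_2(R)$, is the only delicate point of the argument.
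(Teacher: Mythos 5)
Your proof is correct, and its first half --- diagonal reduction via the elementary divisor property, then writing each diagonal entry as a unit times a central idempotent --- is exactly what the paper does. You diverge in how the twin-good witness is produced. The paper keeps the ring whole: it factors $PAQ=UE$ with $U=\operatorname{diag}(u,v)$ a unit and $E=\operatorname{diag}(e_1,e_2)$ an idempotent, then exhibits the single matrix $V=\bigl(\begin{smallmatrix}0&-1\\-1&-e_2\end{smallmatrix}\bigr)$ and writes down explicit inverses of $V$, $E+V$, $E-V$ as polynomials in $e_1,e_2$ (for instance $(E-V)^{-1}$ has entries such as $4e_1e_2-2e_2$), so one computation covers all values of the idempotents simultaneously. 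You instead use the centrality of $e_1,e_2$ to split $R$ into the four factors $Rf_{ij}$, reducing to the case where each diagonal entry is either a unit or zero; the degenerate cases are handled by the swap matrix, and the ``both units'' case by $U=DV$ with $V=\bigl(\begin{smallmatrix}0&1\\1&1\end{smallmatrix}\bigr)$, where your observation that $\det V=-1$ and $\operatorname{tr}V=1$ force $V$, $I+V$, $I-V$ to be unimodular over $\mathbb Z$ (hence invertible over every ring, including those of characteristic $2$ or $3$) is the right conceptual explanation of why such a $V$ exists at all. Both arguments use centrality of the idempotents in an essential way --- yours to split the ring, the paper's to verify its explicit inverses --- and your componentwise units, reassembled, yield a matrix of the same general shape as the paper's $UV$. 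Your route isolates the key unimodularity condition more transparently; the paper's is shorter because it avoids the product decomposition and the reassembly step.
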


\begin{proof}
Let $A$ be an arbitrary element of $\mathbb M_{2}(R)$. As $R$ is an elementary divisor ring, there exist invertible matrices $P, Q\in \mathbb M_{2}(R)$ such that \[PAQ=\left[
\begin{array}{cc}
a & 0 \\
\noalign{\vspace{-0.3em}}
0 & b
\end{array}
\right] \]
for some $a, b\in R$. Since $R$ is abelian regular, there exist $u,v \in U(R)$ and central idempotents $e_{1},e_{2}\in R$ such that $a = e_{1}u , b = e_{2}v$. \\
Then we can write  $PAQ= UE$ where  
\[U=\left[
\begin{array}{cc}
u & 0 \\
\noalign{\vspace{-0.3em}}
0 & v
\end{array}
\right] \]  and \[E=\left[
\begin{array}{cc}
e_{1} & 0 \\
\noalign{\vspace{-0.3em}}
0 & e_{2}
\end{array} 
\right] .\] 
Clearly $U$ is a unit in $\mathbb M_2(R)$ and $E$ is an idempotent in $\mathbb M_2(R)$. We consider $V\in \mathbb M_2(R)$ of the form \[V=\left[
\begin{array}{cc}
0 & -1 \\
\noalign{\vspace{-0.3em}}
-1 & -e_{2}
\end{array}
\right] .\] 
Clearly the matrix $V$ is a unit with inverse \[V^{-1}=\left[
\begin{array}{cc}
e_2 & -1 \\
\noalign{\vspace{-0.3em}}
-1 & 0
\end{array}
\right] .\] 
 
\noindent Now we have \[E-V=\left[
\begin{array}{cc}
e_{1} & 1 \\
\noalign{\vspace{-0.3em}}
1 & 2e_{2}
\end{array}
\right] .\] Clearly $E-V$ is a unit with its inverse given by 
\[(E-V)^{-1}=\left[
\begin{array}{cc}
4e_{1}e_{2}-2e_{2} & 1-2e_{1}e_{2} \\
\noalign{\vspace{-0.3em}}
1-2e_{1}e_{2} & 2e_{1}e_{2}-e_{1}
\end{array}
\right]. \] 

We have \[E+V=\left[
\begin{array}{cc}
e_{1} & -1 \\
\noalign{\vspace{-0.3em}}
-1 & 0
\end{array}
\right] .\] Clearly $E+V$ is a unit with its inverse given by 
\[(E+V)^{-1}=\left[
\begin{array}{cc}
0 & -1 \\
\noalign{\vspace{-0.3em}}
-1 & -e_1
\end{array}
\right]. \]  

\noindent Thus we have obtained a unit $V$ such that both $E-V$ and $E+V$ are units. Clearly, then $UV,UE-UV,UE+UV$ are units in $\mathbb M_2(R)$. Thus $PAQ-UV$ and $PAQ+UV$ are units. Therefore $PAQ$ is twin-good and consequently, multiplying by $P^{-1}$ in left and $Q^{-1}$ in right, we conclude that $A$ is twin good. This shows that $\mathbb M_2(R)$ is also twin-good.
\end{proof}

\begin{corollary} \label{divmatrix}
If $R$ is an abelian regular ring, then the matrix ring $\mathbb M_n(R)$ is twin-good for each $n\ge 2$. 

In particular, if $D$ is a division ring, then the matrix ring $\mathbb M_n(D)$ is twin-good for each $n\ge 2$.
\end{corollary}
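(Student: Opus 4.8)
The plan is to derive this corollary directly from the two preceding lemmas, the only extra ingredient being the observation that an abelian regular ring is an elementary divisor ring. First I would recall the standard fact that every abelian regular ring is unit-regular: since all idempotents are central, $R$ is reduced, hence strongly regular, and strongly regular rings are unit-regular. As already noted above (following \cite{AGMP}), unit-regular rings are elementary divisor rings, so $R$ itself is an elementary divisor ring. This is the one hypothesis I need in order to bring Lemma \ref{edr} into play.

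With this in hand, the argument splits cleanly according to the size of $n$. For $n \ge 3$, I would simply apply Lemma \ref{edr} to the elementary divisor ring $R$ to conclude that $\mathbb M_n(R)$ is twin-good. For the remaining case $n = 2$, Lemma \ref{abelian} already gives that $\mathbb M_2(R)$ is twin-good directly, and it is precisely this separate treatment of $n=2$ that is needed, since Lemma \ref{edr} only covers $n \ge 3$. Combining the two cases yields that $\mathbb M_n(R)$ is twin-good for every $n \ge 2$.

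For the final assertion, I would verify that any division ring $D$ is abelian regular: every nonzero element of $D$ is a unit, so $D$ is von Neumann regular, and its only idempotents are $0$ and $1$, which are central. Hence the general statement applies to $D$ and gives that $\mathbb M_n(D)$ is twin-good for each $n \ge 2$.

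Since the corollary follows so immediately from Lemmas \ref{edr} and \ref{abelian}, there is no serious obstacle to overcome; the single point that requires care is the justification that abelian regularity forces $R$ to be an elementary divisor ring, so that Lemma \ref{edr} is legitimately applicable. Once that is in place, the proof is merely a matter of assembling the two cases $n = 2$ and $n \ge 3$.
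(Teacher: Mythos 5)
Your proof is correct and follows exactly the route the paper intends: Lemma \ref{abelian} for $n=2$ and Lemma \ref{edr} for $n\ge 3$, with the (correct) justification that an abelian regular ring is unit-regular and hence an elementary divisor ring, which the paper itself uses implicitly. No differences worth noting.
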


\begin{proof}
It is straightforward from Lemma \ref{edr} and Lemma \ref{abelian}.
\end{proof}

\begin{remark}
As a consequence of the above corollary, it follows that a semilocal ring $R$ is twin-good if and only if $R$ has no factor ring isomorphic to $\mathbb Z_2$ or $\mathbb Z_3$.  
\end{remark}

\bigskip

\noindent Now we are ready to prove our main theorem.

\begin{theorem}\label{main} A right self-injective ring $R$ is twin good if and only if $R$ has no factor ring isomorphic to $\mathbb Z_2$ or $\mathbb Z_{3}$.
\end{theorem}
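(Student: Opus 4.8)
The plan is to treat the two implications separately, with essentially all of the work lying in sufficiency. For necessity, suppose $R$ is twin-good and that $R/I \cong \mathbb{Z}_2$ or $R/I\cong\mathbb{Z}_3$ for some proper ideal $I$. By Lemma \ref{radtwin}(i) the factor ring $R/I$ would then be twin-good, but neither $\mathbb{Z}_2$ nor $\mathbb{Z}_3$ is: in $\mathbb{Z}_3$ the only units are $1$ and $2$, and for $a=1$ the choice $u=1$ gives $a-u=0$ while $u=2$ gives $a+u=0$, so no unit $u$ makes both $a\pm u$ units; a similar check disposes of $\mathbb{Z}_2$. This contradiction proves $R$ has no such factor ring.

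For sufficiency, assume $R$ is right self-injective with no factor ring isomorphic to $\mathbb{Z}_2$ or $\mathbb{Z}_3$. First I would reduce to the regular case. Since $\mathbb{Z}_2$ and $\mathbb{Z}_3$ are fields, any ideal with such a factor ring is maximal and hence contains $J(R)$, so $R/J(R)$ has no factor ring isomorphic to $\mathbb{Z}_2$ or $\mathbb{Z}_3$ either; as $R/J(R)$ is von Neumann regular and right self-injective, Lemma \ref{radtwin}(ii) lets me replace $R$ by $R/J(R)$ and assume $R$ is von Neumann regular and right self-injective. I would then invoke the type decomposition of such rings, writing $R$ as a direct product of its Type $I_f$, $I_\infty$, $II_f$, $II_\infty$ and $III$ parts. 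By Lemma \ref{radtwin}(iii) it suffices to show each part, which again inherits the hypothesis, is twin-good.

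The parts other than the abelian one are handled by the matrix lemmas already established. The Type $I_f$ part is isomorphic to a direct product $\prod_{n}\mathbb{M}_n(S_n)$ with each $S_n$ abelian regular and right self-injective; for $n\ge 2$ each factor $\mathbb{M}_n(S_n)$ is twin-good by Corollary \ref{divmatrix}. Each of the remaining parts is either properly infinite (Types $I_\infty$, $II_\infty$, $III$) or of Type $II_f$, so in either case $1$ is a sum of $n\ge 3$ mutually orthogonal equivalent idempotents, whence the part is isomorphic to $\mathbb{M}_n(T)$ with $T=eRe$ again regular right self-injective, hence an elementary divisor ring, and twin-good by Lemma \ref{edr}. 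This reduces everything to the index-one factor $S=S_1$: an abelian regular right self-injective ring with no factor ring isomorphic to $\mathbb{Z}_2$ or $\mathbb{Z}_3$, which I must show is twin-good, and this is where the main difficulty lies.

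For the abelian factor I would argue through the Pierce representation. As $S$ is strongly regular and right self-injective, its Boolean algebra $B$ of central idempotents is complete, every Pierce stalk $S_x$ is a division ring (a strongly regular ring with no nontrivial idempotents), support idempotents are central and correspond to clopen sets, and an element is a unit precisely when it is nonzero in every stalk. The hypothesis translates exactly into the statement that every stalk has at least four elements, since a stalk isomorphic to $\mathbb{Z}_2$ or $\mathbb{Z}_3$ would, being simple, descend to a factor ring of $S$ of that form. Fixing $a\in S$, the counting behind Lemma \ref{divring} shows $S_x\setminus\{0,a_x,-a_x\}$ is nonempty for every $x$, so a twin-good witness exists stalkwise. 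The crux is to glue these local choices into a single unit $w\in S$ with $a+w$ and $a-w$ units; I would do this by a Zorn's lemma argument over orthogonal families of central idempotents $\{e_i\}$ carrying partial witnesses $w_i\in Se_i$ (each satisfying that $w_i$, $ae_i+w_i$, $ae_i-w_i$ are units in $Se_i$), forming the supremum idempotent and the element $w=\sum_i w_i$ via the self-injectivity (equivalently, completeness of $B$), and then using that the locus on which $w$, $a+w$, $a-w$ are simultaneously units is clopen and contains every $e_i$ to conclude this maximal idempotent is $1$; the extension step is powered by the fact that a stalkwise-good element remains good on a clopen neighborhood. The hard part will be this gluing: justifying rigorously that self-injectivity lets the locally chosen units be assembled into a genuine global unit of $S$, for it is exactly here — and not in the regularity alone — that the self-injective hypothesis is indispensable.
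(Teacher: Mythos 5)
Your overall architecture coincides with the paper's: both arguments pass to $R/J(R)$, invoke the type decomposition of regular right self-injective rings, dispose of the purely infinite and Type $II_f$ parts via $T\cong\mathbb M_3(T)$ together with Lemma \ref{edr}, and reduce the Type $I_f$ part to $\prod\mathbb M_{n_i}(S_i)$ with the cases $n_i\ge 2$ handled by Lemma \ref{edr} and Lemma \ref{abelian}. The genuine divergence is in the remaining case of an abelian regular self-injective ring $S$ with no factor $\mathbb Z_2$ or $\mathbb Z_3$. The paper argues by contradiction with a Zorn's lemma argument on \emph{ideals}: among the ideals $I$ for which the image of the bad element stays non-twin-good in $S/I$, a maximal one $M$ makes $S/M$ indecomposable, hence (being abelian regular) idempotent-free, hence a division ring, hence $\mathbb Z_2$ or $\mathbb Z_3$ by Lemma \ref{divring} --- contradiction. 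You instead propose a Pierce-representation gluing over the Boolean algebra of central idempotents. Your route is viable, but two corrections are in order. First, the gluing you flag as ``the hard part'' is easier than you suggest and does not require self-injectivity or completeness of $B$ at all: the Pierce spectrum is compact, the locus where a fixed lift $w$ makes $w$, $a+w$, $a-w$ simultaneously nonzero in the stalks is clopen (it is the intersection of three support cozero-sets), so finitely many such clopen sets cover the spectrum, and a finite clopen partition $1=e_1+\cdots+e_k$ with witnesses $w_i\in Se_i$ yields the global witness $w=\sum w_ie_i$ by a finite computation. If you insist on the infinite orthogonal-family version, you must separately prove (it is true, via essentiality of $\bigoplus e_iS$ and nonsingularity) that an element which is a unit in each $Se_i$ with $\sup e_i=1$ is a unit in $S$; your sketch conflates this step with the maximality argument. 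Second, and consequently, your closing claim that self-injectivity is ``indispensable'' precisely at this gluing is not right --- the abelian case goes through for arbitrary abelian regular rings, as both the paper's ideal-theoretic argument and the compactness argument show; self-injectivity is what powers the type decomposition and the isomorphisms $T\cong\mathbb M_3(T)$, not the abelian stalk analysis. With the gluing done by compactness (or replaced by the paper's shorter Zorn argument on ideals), your proof is complete and correct.
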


\begin{proof}
Let $R$ be a right self-injective ring such that R has no factor ring isomorphic to $\mathbb Z_2$ or $\mathbb Z_3$. We know that $R/J(R)$ is a von Neumann regular right self-injective ring. From the type theory of von Neumann regular right self-injective rings it follows that $R/J(R) \cong R_1 \times R_2 \times R_3 \times R_4 \times R_5$ where $R_1$ is of type $I_f$, $R_2$ is of type $I_\infty$, $R_3$ is of type $II_f$, $R_4$ is of type $II_\infty$, and $R_5$ is of type $III$ (see \cite[Theorem 10.22]{Goodearl}).  Taking $T=R_2 \times R_4 \times R_5$, we may write $R/J(R) \cong R_1 \times R_3 \times T$, where $T$ is purely infinite. We have $T_T\cong nT_T$ for all positive integers $n$ by \cite[Theorem 10.16]{Goodearl}. In particular, for $n=3$, this yields $T \cong \mathbb M_{3}(T)$. Since $T$ is an elementary divisor ring, by Lemma \ref{edr} we conclude that $\mathbb M_{3}(T)$ is twin-good and consequently $T$ is twin-good.

Next we consider $R_1$. We know that $R_1 \cong \prod\mathbb{M}_{n_i}(S_i)$ where each $S_i$ is an abelian regular self-injective ring (see \cite[Theorem 10.24]{Goodearl}).  Since each $S_i$ is an elementary divisor ring, we know $M_{n_i}(S_i)$ is twin good whenever $n_i \ge 3$. If $n_i=2$, then by Lemma \ref{abelian}, we have that $\mathbb M_{n_i}(S_i)$ is twin-good.

Consider $n_{i}$ = 1. Then we wish to prove that $S_i$ is twin-good. This was shown in \cite{Perkins} but we present the proof here for the sake of completeness. Assume to the contrary that $S_i$ is not twin-good. Then there exists an element $x\in S_i$ such that, for any $u\in U(S_i)$, either $x+u\not\in U(S_i)$ or $x-u\not\in U(S_i)$. Consider the set $\mathcal S=\{I: I$ is an ideal of $S_i$ such that $\bar{x}+\bar{u}\not\in U(S_i/I)$ or $\bar{x}-\bar{u}\not\in U(S_i/I)$, for each $u\in U(S_i)\}$. Clearly, $\mathcal S$ is a non-empty set. It may be shown that $\mathcal S$ is an inductive set and hence, by Zorn's lemma, $\mathcal S$ has a maximal element, say $M$. Clearly then $S_i/M$ is indecomposable as a ring and therefore it has no nontrivial central idempotent. Since $S_i/M$ is an abelian regular ring, this yields that $S_i/M$ has no nontrivial idempotent. Hence, $S_i/M$ is a division ring. Therefore, by Lemma \ref{divring}, it follows that $S_i/I\cong \mathbb Z_2$ or $S_i/I \cong \mathbb Z_3$. This yields a contradiction to our assumption. Hence, $S_i$ is twin-good.

We now consider $R_3$. Since $R_3$ is of type $II_f$, we can write $R_3 \cong n(e_{n}R_3)$ for each $n \in \mathbb N$ where $e_n$ is an idempotent in $R$ (see \cite[Proposition 10.28]{Goodearl}). In particular, for $n = 3$ we have $R_3 \cong M_{3}(e_{3}R_{3}e_{3}$). As $e_{3}R_{3}e_{3}$ is an elementary divisor ring, it follows that $M_{3}(e_{3}R_{3}e_{3})$ is twin good by Lemma \ref{edr}.

Thus $R/J(R)$, being a direct product of twin-good rings, is twin good. Hence, by Lemma \ref{radtwin}, $R$ is twin good.

The converse is obvious.
\end{proof}

As a consequence, we have the following

\begin{corollary}
For any linear transformation $T$ on a right vector space $V$ over a division ring $D$, there exists an invertible linear transformation $S$ on $V$ such that both $T-S$ and $T+S$ are invertible, except when $V$ is one-dimensional over $\mathbb Z_2$ or $\mathbb Z_3$. 
\end{corollary}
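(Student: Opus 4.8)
The plan is to translate the statement into ring theory and then invoke Theorem \ref{main}. Set $R = \End(V_D)$, the ring of all $D$-linear endomorphisms of the right $D$-vector space $V$. An element $S \in R$ is an invertible linear transformation precisely when it is a unit of $R$, so the assertion ``there is an invertible $S$ with both $T-S$ and $T+S$ invertible'' is exactly the statement that every element of $R$ is twin-good, i.e.\ that $R$ is a twin-good ring. Since $V_D$ is a vector space over a division ring, $R$ is a (von Neumann regular) right self-injective ring, so Theorem \ref{main} applies: $R$ is twin-good if and only if $R$ has no factor ring isomorphic to $\mathbb Z_2$ or $\mathbb Z_3$. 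Thus the whole problem reduces to deciding exactly when $\End(V_D)$ admits such a factor ring.

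First I would dispose of the exceptional cases. If $V$ is one-dimensional over $\mathbb Z_2$ (resp.\ $\mathbb Z_3$), then $R \cong \mathbb Z_2$ (resp.\ $\mathbb Z_3$) is itself a factor ring isomorphic to $\mathbb Z_2$ (resp.\ $\mathbb Z_3$), and a direct check shows such $R$ is not twin-good; these are the genuine exceptions. It then remains to show that in every other case $R$ has no factor ring isomorphic to $\mathbb Z_2$ or $\mathbb Z_3$, whence $R$ is twin-good by Theorem \ref{main}.

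I would split the remaining analysis according to $\dim_D V$. If $\dim_D V = n$ is finite with $n \ge 2$, then $R \cong \mathbb M_n(D)$ is a simple ring whose only factor rings are $0$ and $R$ itself; since $\mathbb M_n(D)$ is noncommutative for $n \ge 2$, it cannot be isomorphic to the commutative rings $\mathbb Z_2$ or $\mathbb Z_3$. (Equivalently, twin-goodness here follows at once from Corollary \ref{divmatrix}.) If $\dim_D V = 1$ but $|D| \ge 4$, then $R \cong D$, whose only factor rings $0$ and $D$ are again not isomorphic to $\mathbb Z_2$ or $\mathbb Z_3$, and Lemma \ref{divring} gives twin-goodness directly.

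The main obstacle is the infinite-dimensional case, which I would handle using the self-similarity of full linear rings. When $\dim_D V$ is infinite we have $V \cong V \oplus V \oplus V$ as right $D$-modules, so $R \cong \mathbb M_3(R)$. Using the standard correspondence between two-sided ideals of $\mathbb M_3(R)$ and those of $R$, every proper factor ring of $R$ is isomorphic to $\mathbb M_3(R/J)$ for some ideal $J \subsetneq R$, hence is a nonzero matrix ring over $R/J$ and therefore noncommutative; in particular it cannot be isomorphic to $\mathbb Z_2$ or $\mathbb Z_3$. Thus $R$ has no factor ring isomorphic to $\mathbb Z_2$ or $\mathbb Z_3$, and Theorem \ref{main} yields that $R$ is twin-good. (Alternatively, one can bypass the factor-ring criterion in this case: $R$ is an elementary divisor ring and $R \cong \mathbb M_3(R)$, so Lemma \ref{edr} shows $\mathbb M_3(R)$, and hence $R$, is twin-good.) Collecting the cases establishes the corollary.
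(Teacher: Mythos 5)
Your proposal is correct and follows exactly the route the paper intends: the corollary is stated as an immediate consequence of Theorem \ref{main} applied to the right self-injective ring $\End(V_D)$, and your case analysis (simplicity of $\mathbb M_n(D)$ in finite dimension, $R\cong\mathbb M_3(R)$ in infinite dimension) correctly fills in the verification, left implicit in the paper, that $\End(V_D)$ has no factor ring isomorphic to $\mathbb Z_2$ or $\mathbb Z_3$ outside the two exceptional cases.
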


\begin{remark}
\begin{enumerate}[(i)]
\item \noindent Wang and Zhou in \cite{WZ} have shown that if $D$ is a division ring such that $|D|>3$, then for each linear transformation $T$ on a right vector space $V_D$, there exists an invertible linear transformation $S$ on $V_D$ such that $T+S$, and $T-S^{-1}$ are invertible.

\bigskip

\item Chen \cite{Chen2} has recently shown that if $V$ is a countably generated right vector space over a division ring $D$ where $|D|>3$, then for each linear transformation $T$ on $V_D$, there exist invertible linear transformations $P$ and $Q$ on $V_D$ such that $T-P$, $T-P^{-1}$ and $T^2-Q^2$ are invertible.    
\end{enumerate}
\end{remark}

\noindent Now we may adapt the techniques of \cite{KS1} and generalize our main result to the endomorphism rings of several classes of modules. Recall that a module $M$ is said to be {\it $N$-injective} if for every submodule $N_1$ of the module $N$, all homomorphisms $N_1\rightarrow M$ can be extended to homomorphisms $N\rightarrow M$. A right $R$-module $M$ is {\it injective} if $M$ is $N$-injective for every $N\in $ Mod-$R$. A module $M$ is said to be {\it quasi-injective} if $M$ is $M$-injective. 

\bigskip

\noindent Consider the following three conditions on a module $M$;\\
C1: Every submodule of $M$ is essential in a direct summand of $M$.\\
C2: Every submodule of $M$ isomorphic to a direct summand of $M$ is itself a direct summand of $M$.\\
C3: If $N_1$ and $N_2$ are direct summands of $M$ with $N_1\cap N_2=0$ then $N_1\oplus N_2$ is also a direct summand of $M$.

\bigskip

\noindent A module $M$ is called a {\it continuous module} if it satisfies conditions C1 and C2. A module $M$ is called {\it $\pi$-injective} (or {\it quasi-continuous}) if it satisfies conditions C1 and C3. 

\bigskip

\noindent A right $R$-module $M$ is said to satisfy the exchange property if for every right $R$-module $A$ and any two direct sum decompositions $A=M^{\prime}\oplus N=\oplus_{i\in I}A_{i}$ with $M^{\prime} \simeq M$, there exist submodules $B_i$ of $A_i$ such that $A=M^{\prime} \oplus (\oplus_{i \in I}B_i)$.\\
If this hold only for $|I|<\infty$, then $M$ is said to satisfy the finite exchange property. A ring $R$ is called an {\it exchange ring} if $R_R$ satisfies the (finite) exchange property.  

\noindent Now, we have the following for endomorphism ring of a quasi-continuous module
 
\begin{corollary}
Let $S$ be any ring, $M$ be a quasi-continuous right $S$-module with finite exchange property and $R=\End(M_S)$. If no factor ring of $R$ is isomorphic to $\mathbb{Z}_{2}$ or $\mathbb Z_3$, then $R$ is twin-good.
\end{corollary}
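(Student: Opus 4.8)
The plan is to deduce this from Theorem \ref{main} by passing to the factor ring $R/J(R)$. By part (ii) of Lemma \ref{radtwin}, the ring $R$ is twin-good if and only if $\overline{R}:=R/J(R)$ is twin-good, so it is enough to prove that $\overline{R}$ is twin-good.

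The essential structural input --- and the place where the techniques of \cite{KS1} together with the theory of quasi-continuous (that is, $\pi$-injective) modules enter --- is the fact that for a quasi-continuous module $M$ with the finite exchange property the endomorphism ring $R=\End(M_S)$ satisfies $J(R)=\{\,f\in R:\ker f \text{ is essential in } M\,\}$, idempotents lift modulo $J(R)$, and $\overline{R}=R/J(R)$ is a von Neumann regular, right self-injective ring. First I would record this carefully. The finite exchange hypothesis is exactly what upgrades $M$ to the full exchange property and delivers the lifting of idempotents, and these in turn yield both the regularity and the right self-injectivity of $\overline{R}$; this is the content of the structure theory of Mohamed and M\"{u}ller for continuous and quasi-continuous modules, adapted as in \cite{KS1}.

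Granting this, the remaining steps are formal. Every factor ring of $\overline{R}$ has the shape $(R/J(R))/(I/J(R))\cong R/I$ for some ideal $I\supseteq J(R)$, hence is a factor ring of $R$; so by hypothesis $\overline{R}$ has no factor ring isomorphic to $\mathbb{Z}_2$ or $\mathbb{Z}_3$. Since $\overline{R}$ is right self-injective, Theorem \ref{main} now applies and shows that $\overline{R}$ is twin-good. Finally, Lemma \ref{radtwin}(ii) lifts this back up, so $R$ is twin-good.

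The main obstacle is precisely the structural step, namely proving that $\overline{R}$ is both von Neumann regular and right self-injective. This is not a formal consequence of the definitions: it depends on the subtle interaction between quasi-continuity and the finite exchange property, and it is the only substantial ingredient in the argument. Once it is in place, the corollary follows by the purely formal combination of Theorem \ref{main} with the radical-invariance of twin-goodness recorded in Lemma \ref{radtwin}.
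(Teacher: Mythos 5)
The overall strategy (reduce modulo a radical-contained ideal, apply structure theory, invoke Theorem \ref{main}, lift back via Lemma \ref{radtwin}(ii)) matches the paper, but your key structural claim is not correct, and this creates a genuine gap. You assert that for a quasi-continuous module with the finite exchange property, $R/J(R)$ is von Neumann regular \emph{and right self-injective}. What the Mohamed--M\"{u}ller theory actually gives (\cite{mm}, Cor.\ 3.13, which is what the paper cites) is weaker: setting $\Delta=\{f\in R:\ker f\subseteq_e M\}$, one has $\Delta\subseteq J(R)$ and $R/\Delta\cong R_1\times R_2$, where only $R_1$ is von Neumann regular right self-injective, while $R_2$ is merely an exchange ring with no non-zero nilpotent elements. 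The factor $R_2$ need not be right self-injective, so Theorem \ref{main} cannot be applied to the whole quotient as you propose. You have, in effect, assumed away the only part of the corollary that does not follow formally from Theorem \ref{main}.

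The missing piece is the treatment of $R_2$. The paper handles it as follows: since $R_2$ is reduced, all its idempotents are central; if some $a\in R_2$ were not twin-good, a Zorn's lemma argument (as in the proof of Theorem \ref{main}) produces an ideal $I$ maximal with respect to $a+I$ not being twin-good in $R_2/I$, and $R_2/I$ then has no non-trivial central idempotents, hence no non-trivial idempotents at all. A exchange ring with no non-trivial idempotents is local, so $(R_2/I)/J(R_2/I)$ is a division ring in which some element fails to be twin-good, forcing it to be $\mathbb{Z}_2$ or $\mathbb{Z}_3$ by Lemma \ref{divring} --- contradicting the hypothesis on factor rings of $R$. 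Without this argument (or a proof that $R_2=0$, which is false in general), your proof is incomplete.
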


\begin{proof}
This proof is almost identical to the proof of \cite[Theorem 3]{KS1} but we write it here for the sake of completeness. Let $\Delta =\{f\in R:\ker $ $f\subset _{e}M\}$. Then $\Delta $ is an ideal of $R$ and $\Delta \subseteq J(R)$. By (\cite{mm}, Cor. 3.13), $\overline{R}=R/\Delta \cong R_{1}\times
R_{2}$, where $R_{1}$ is von Neumann regular right self-injective and $R_{2}$ is an
exchange ring with no non-zero nilpotent element. We have already shown in
Theorem \ref{main} that $R_{1}$ is twin-good. Since, $R_{2}$
has no non-zero nilpotent element, each idempotent in $R_{2}$ is central. Now we proceed to show that $R_2$ is also twin-good. Assume to the contrary that there exists an element $a\in R_{2}$ which is not twin-good. Then as in the
proof of Theorem \ref{main}, we find an ideal $I$ of $R_{2}$ such that $x=a+I\in
R_{2}/I$ is not twin-good in $R_{2}/I$ and $R_{2}/I$ has no central
idempotent. This implies that $R_{2}/I$ is an exchange ring without any
non-trivial idempotent, and hence it must be local. If $S=R_{2}/I$ then $x+J(S)$ is not twin-good in $S/J(S)$, which is a division ring.
Therefore, $S/J(S)\cong \mathbb{Z}_{2}$, or $\mathbb Z_3$, a contradiction. Hence, every
element of $R_{2}$ is twin-good. Therefore, every element of $\overline{R}$ is twin-good and hence $R$
is twin-good. This completes the proof.
\end{proof}

\begin{corollary}
The endomorphism ring $R=\End(M_S)$ of a continuous module $M_S$ is twin-good if $R$ has no factor isomorphic to $\mathbb{Z}_{2}$ or $\mathbb Z_3$.
\end{corollary}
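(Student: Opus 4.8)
The plan is to deduce this corollary from the preceding one by verifying that a continuous module falls under its hypotheses, namely that it is quasi-continuous and enjoys the finite exchange property. Since the preceding corollary already delivers twin-goodness for the endomorphism ring of any quasi-continuous module with the finite exchange property (under the same hypothesis on factor rings), no new ring-theoretic analysis of $R = \End(M_S)$ is needed; the entire task reduces to checking two module-theoretic properties of $M_S$.

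First I would check that every continuous module is quasi-continuous, i.e. that conditions C1 and C2 force C3. This implication is standard and elementary: given direct summands $N_1, N_2$ of $M$ with $N_1 \cap N_2 = 0$, write $M = N_1 \oplus L$ and let $\pi : M \to L$ be the projection with kernel $N_1$. Then $\pi|_{N_2}$ is injective, so $\pi(N_2) \cong N_2$ is isomorphic to a direct summand of $M$; by C2, $\pi(N_2)$ is itself a direct summand of $M$, and being contained in the summand $L$ it is a summand of $L$. Since $n_2 - \pi(n_2) \in \ker\pi = N_1$ for each $n_2 \in N_2$, one checks that $N_1 \oplus N_2 = N_1 \oplus \pi(N_2)$, which is therefore a direct summand of $M$. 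Thus C3 holds and $M$ is quasi-continuous.

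Second, I would invoke the theorem of Mohamed and M\"uller that every continuous module has the finite exchange property. This is the substantive external input, and it is exactly the step I expect to be the main obstacle if one insists on a self-contained argument: proving the exchange property for continuous modules is nontrivial, and it is precisely the reason the hypothesis can be dropped here although it had to be assumed explicitly in the previous corollary. Granting it, $M_S$ is a quasi-continuous module with the finite exchange property, so the preceding corollary applies verbatim, and we conclude that whenever $R$ has no factor ring isomorphic to $\mathbb{Z}_2$ or $\mathbb{Z}_3$, the ring $R = \End(M_S)$ is twin-good. This completes the proof.
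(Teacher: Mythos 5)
Your proposal is correct and follows exactly the route of the paper, which deduces the result from the preceding corollary by noting that a continuous module is quasi-continuous and has the exchange property. You simply supply more detail (the C1+C2 $\Rightarrow$ C3 argument and the explicit appeal to Mohamed--M\"uller for the exchange property) than the paper's one-line justification.
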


\begin{proof}
It follows from the above corollary in view of the fact that a continuous module is quasi-continuous and also has exchange property.
\end{proof}

\noindent A module $M$ is called \textit{cotorsion} if every short exact sequence $0\longrightarrow
M\longrightarrow E\longrightarrow F\longrightarrow 0$ with $F$ flat, splits. It is known due to Guil Asensio and Herzog that if $M$ is a
flat cotorsion right $R$-module and $S=\End(M_{R}),$ then $S/J(S)$ is a von Neumann regular right self-injective ring (see \cite{gh}). As a consequence, we have the following

\begin{corollary}
The endomorphism ring $R=\End(M_S)$ of a flat cotorsion (in particular, pure injective) module $M_S$ is twin-good if $R$ has no factor ring isomorphic to $\mathbb{Z}_{2}$ or $\mathbb Z_3$.
\end{corollary}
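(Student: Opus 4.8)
The plan is to reduce the statement to Theorem \ref{main} by passing to the quotient $R/J(R)$. The key external input is the theorem of Guil Asensio and Herzog recalled just above: since $M_S$ is flat cotorsion and $R=\End(M_S)$, the factor ring $R/J(R)$ is von Neumann regular and right self-injective \cite{gh}. This is precisely the hypothesis required to bring $R/J(R)$ under our main result, so the entire argument amounts to feeding this quotient into Theorem \ref{main} and then lifting the conclusion back through the radical.

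First I would verify that the factor-ring condition descends to $R/J(R)$. Every factor ring of $R/J(R)$ is of the form $R/I$ with $J(R)\subseteq I$, and hence is also a factor ring of $R$; thus the assumption that $R$ has no factor ring isomorphic to $\mathbb Z_2$ or $\mathbb Z_3$ guarantees the same for $R/J(R)$. Being right self-injective, $R/J(R)$ therefore satisfies the hypotheses of Theorem \ref{main}, which yields that $R/J(R)$ is twin-good. I would then transport this back to $R$ using the ``in particular'' clause of Lemma \ref{radtwin}(ii): a ring is twin-good if and only if its quotient by the Jacobson radical is twin-good, so $R$ is twin-good. The parenthetical pure injective case is subsumed, since a pure injective module is cotorsion, and so a flat pure injective module is flat cotorsion and the same argument applies verbatim.

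I do not anticipate a serious obstacle in this corollary, as its content rests entirely on the Guil Asensio--Herzog structure theorem, which hands us exactly a right self-injective regular quotient, after which Theorem \ref{main} and Lemma \ref{radtwin} finish the proof mechanically. The only point requiring genuine care is the elementary check that no factor ring isomorphic to $\mathbb Z_2$ or $\mathbb Z_3$ can arise at the level of $R/J(R)$; this follows immediately from the correspondence between ideals of $R/J(R)$ and ideals of $R$ containing $J(R)$, so the real mathematical weight of the statement lies in the cited results rather than in any new step here.
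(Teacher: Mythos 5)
Your proposal is correct and follows exactly the route the paper intends: the paper gives no explicit proof for this corollary, relying on the immediately preceding remark that the Guil Asensio--Herzog theorem makes $R/J(R)$ von Neumann regular right self-injective, after which Theorem \ref{main} and Lemma \ref{radtwin}(ii) finish the argument. Your additional check that the absence of $\mathbb Z_2$ or $\mathbb Z_3$ factor rings descends to $R/J(R)$ is the right (if routine) detail to make explicit.
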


\noindent A right $R$-module $M$ is called a {\it Harada module} if it has a decomposition $M=\oplus_{i\in \mathcal I}M_i$ with $\End(M_i)$ a local ring for each $i$, such that the decomposition complements direct summands of $M$ (that is, if $A$ is direct  summand of $M$ then there exists $\mathcal J\subseteq \mathcal I$ such that $M = A\oplus (\oplus_{j\in \mathcal J}M_j)$. 

\bigskip

\noindent Consider the following conditions on a module $N$;

\bigskip

\noindent ($D1$): For every submodule $A$ of $N$, there exists a decomposition $N=N_1\oplus N_2$ such that $N_1\subseteq A$ and $N_2\cap A$ is small in $N$.

\bigskip

\noindent ($D2$): If $A$ is a submodule of $N$ such that $N/A$ is isomorphic to a direct summand of $N$, then $A$ is a direct summand of $N$.

\bigskip

A right $R$-module $N$ is called a {\it discrete module} if $N$ satisfies the conditions $D1$ and $D2$. It is well known that every discrete module is a Harada module. 

\begin{corollary}
The endomorphism ring $R=\End(M_S)$ of a Harada module $M_S$ is twin-good if $R$ has no factor ring isomorphic to $\mathbb{Z}_{2}$ or $\mathbb Z_3$.
\end{corollary}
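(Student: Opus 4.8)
The plan is to follow exactly the template of the preceding corollary for quasi-continuous modules, replacing its structural input (the decomposition of \cite{mm}, Cor.~3.13) by the corresponding structure theory for Harada modules. First I would use that, since $M_S$ is a Harada module, its decomposition $M=\oplus_{i\in\mathcal I}M_i$ into modules with local endomorphism rings complements direct summands; such an LE-decomposition forces the (finite) exchange property, so that $R=\End(M_S)$ is an exchange ring. I would then invoke the structure theory for endomorphism rings of Harada modules to produce an ideal $I\subseteq J(R)$ with $\overline{R}=R/I\cong R_1\times R_2$, where $R_1$ is von Neumann regular and right self-injective and $R_2$ is an exchange ring with no non-zero nilpotent element (equivalently, one in which every idempotent is central). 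This is the Harada-module counterpart of the self-injective splitting used in the quasi-continuous case, and securing it is the step that carries all the weight.

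Granting this decomposition, the remainder is routine. Since $R$ has no factor ring isomorphic to $\mathbb Z_2$ or $\mathbb Z_3$, the same is true of $\overline{R}$ and hence of each direct factor $R_1$ and $R_2$. The ring $R_1$ is von Neumann regular right self-injective with no factor isomorphic to $\mathbb Z_2$ or $\mathbb Z_3$, so Theorem~\ref{main} already shows that $R_1$ is twin-good.

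For $R_2$ I would reproduce verbatim the Zorn's-lemma argument used both for the factor $R_2$ in the quasi-continuous corollary and for the summands $S_i$ in the proof of Theorem~\ref{main}. Assuming to the contrary that some $a\in R_2$ is not twin-good, I would pass to a maximal element $I$ of the (inductive) set of ideals for which $\bar a$ fails to be twin-good in $R_2/I$; then $R_2/I$ is indecomposable as a ring, and since every idempotent of $R_2$ is central, $R_2/I$ is an exchange ring with no non-trivial idempotent, hence local. Factoring out its Jacobson radical yields a division ring in which the image of $a$ is still not twin-good, so Lemma~\ref{divring} forces this division ring to be $\mathbb Z_2$ or $\mathbb Z_3$, contradicting the hypothesis on factor rings of $R$. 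Therefore $R_2$ is twin-good.

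Finally, $\overline{R}\cong R_1\times R_2$ is a direct product of twin-good rings and hence twin-good by Lemma~\ref{radtwin}(iii); and because $I\subseteq J(R)$, Lemma~\ref{radtwin}(ii) lifts this to give that $R$ itself is twin-good. The single genuine obstacle is the first step, namely obtaining the product decomposition $R/I\cong R_1\times R_2$ of the prescribed form for the endomorphism ring of a Harada module; once that structural fact is in hand, the argument is a direct transcription of the quasi-continuous case.
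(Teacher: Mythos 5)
Your argument has a genuine gap, and you have in fact identified it yourself: the entire proof rests on ``invoking the structure theory for endomorphism rings of Harada modules'' to produce an ideal $I\subseteq J(R)$ with $R/I\cong R_1\times R_2$, where $R_1$ is von Neumann regular right self-injective and $R_2$ is an exchange ring with no non-zero nilpotent elements. No such decomposition theorem is cited or proved, and it is not the known structure theory for Harada modules; the splitting you describe is the Mohamed--M\"{u}ller decomposition specific to quasi-continuous modules with the exchange property, and there is no reason to expect an analogous ``reduced exchange ring'' factor $R_2$ to appear in the Harada setting. A proof that simply transcribes the quasi-continuous argument while leaving its structural input as an unverified placeholder is not complete.

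The structural fact that actually holds (and that the paper uses, citing Kasch) is both different and stronger: for a Harada module $M_S$ with $R=\End(M_S)$, the quotient $R/J(R)$ is a direct product of right full linear rings, i.e.\ of endomorphism rings of vector spaces over division rings. Such a product is a von Neumann regular right self-injective ring, so Theorem~\ref{main} applies directly to $R/J(R)$: since $R$ has no factor ring isomorphic to $\mathbb Z_2$ or $\mathbb Z_3$, neither does $R/J(R)$, hence $R/J(R)$ is twin-good, and Lemma~\ref{radtwin}(ii) lifts this to $R$. With the correct structure theorem in hand, your entire $R_2$-branch (the Zorn's lemma argument for a reduced exchange factor) is unnecessary; the proof collapses to a two-line application of Theorem~\ref{main}.
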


\begin{proof}
It is known due to Kasch \cite{Kasch} that $R/J(R)$ is a direct product of right full linear rings and hence the corollary follows from Theorem \ref{main}.
\end{proof}

\bigskip

\bigskip

\bigskip

\bigskip

\end{document}